\documentclass[11pt]{article}
\usepackage[latin1]{inputenc}
\usepackage[margin=1in]{geometry}
\usepackage{amsmath,amssymb,amstext}
\usepackage{amsthm}
\usepackage{cleveref}
\usepackage{authblk}
\usepackage{enumerate}
\usepackage{dirtytalk}
\usepackage{tikz}
\usepackage{thmtools}
\usepackage{thm-restate}
\usetikzlibrary{shapes}

\usepackage{tikz}
\usepackage{thmtools}
\usepackage{thm-restate}
\usetikzlibrary{shapes}

%\declaretheorem[name=Theorem,numberwithin=section]{thm}
\newtheorem{thm}{Theorem}[section]
%THEOREM TYPES

\newtheorem{claim}{Claim}%[]
\newtheorem{question}[equation]{Question}

\newtheorem*{ack}{Acknowledgements}

\numberwithin{equation}{section}
\theoremstyle{definition} 
\newtheorem{definition}[thm]{Definition}

%----------------------------------------------------------------------
%
% Custom colours and operators
%
%----------------------------------------------------------------------

\definecolor{asparagus}{rgb}{0.53, 0.66, 0.42}
\definecolor{cerulean}{rgb}{0.0, 0.48, 0.65}
\definecolor{cornellred}{rgb}{0.7, 0.11, 0.11}
\definecolor{darklavender}{rgb}{0.45, 0.31, 0.59}
\definecolor{darkslateblue}{rgb}{0.28, 0.24, 0.55}
\definecolor{burntorange}{rgb}{0.8, 0.33, 0.0}
\date{}

\begin{document}

%TITLE, ETC

\author{Evelyne Smith-Roberge\thanks{We acknowledge the support of the Natural Sciences and Engineering Research Council of Canada (NSERC) [CGSD3 Grant No. 2020-547516]. \\
\hphantom{|} Cette recherche a \'{e}t\'{e} financ\'{e}e par le Conseil de recherches en sciences naturelles et en g\'{e}nie du Canada (CRSNG) [CGSD3 Grant No. 2020-547516].} }
%\author{Evelyne Smith-Roberge$^\dagger$}
\affil{Dept.~of Combinatorics and Optimization, University of Waterloo, 
Canada. \\ \texttt{e2smithr@uwaterloo.ca}}

\title{On the choosability with separation of planar graphs and its correspondence colouring analogue}
\date{\today}
\maketitle
\date{}

%------------------------------
%-------  INTRODUCTION --------
%------------------------------
\begin{abstract}
A list assignment $L$ for a graph $G$ is an $(\ell,k)$-list assignment if $|L(v)|\geq \ell$ for each $v \in V(G)$ and $|L(u) \cap L(v)| \leq k$ for each $uv \in E(G)$. We say $G$ is $(\ell,k)$-choosable if it admits an $L$-colouring for every $(\ell, k)$-list assignment $L$. We prove that if $G$ is a planar graph with $(4,2)$-list assignment $L$ and for every triangle $T \subseteq G$ we have that $|\bigcap_{v \in V(T)} L(v)| \neq 2$, then $G$ is $L$-colourable. In fact, we prove a slightly stronger result: if $G$ contains a clique $H$  such that $V(H) \cap V(T) \neq \emptyset$ for every triangle $T \subseteq G$ with $|\bigcap_{v \in V(T)} L(v)| = 2$, then $G$ is $L$-colourable. Additionally, we give a counterexample to the correspondence colouring analogue of $(4,2)$-choosability for planar graphs. \end{abstract}
\maketitle

\section{Introduction}
List colouring is a generalization of vertex colouring introduced in the late '70s by Erd\H{o}s, Rubin, and Taylor \cite{erdos1979choosability} and, independently, by Vizing \cite{vizing}. A \emph{$k$-list assignment} $L$ for a graph $G$ is a function that assigns to each vertex $v \in V(G)$ a list $L(v)$ of at least $k$ colours. We say $G$ is $L$-colourable if $G$ admits a proper colouring $\phi$ with $\phi(v) \in L(v)$ for all $v \in V(G)$, and that $G$ is \emph{$k$-choosable} if it admits an $L$-colouring no matter the $k$-list assignment $L$.

 In '98, Kratochv{\'\i}l, Tuza, and Voigt \cite{kratochvil1998brooks} introduced a variation of list colouring defined below.
\begin{definition}
Let $\ell$ and $k$ be positive integers. An $(\ell, k)$-list assignment for a graph $G$ is an $\ell$-list assignment $L$ such that for each $uv \in E(G)$, we have $|L(u) \cap L(v)| \leq k$.
\end{definition}

When $\ell = k$, we recover ordinary choosability; when $\ell > k$, the concept is referred to in the literature as \emph{choosability with separation}. In \cite{kratochvil1998brooks}, Kratochv{\'\i}l, Tuza,  and Voigt showed that every planar graph is $(4,1)$-choosable. This was later strengthened by Kierstead and Lidick{\`y} \cite{kierstead2015choosability}, who showed that the same result holds if we allow an independent set of vertices to have lists of size three. There exist planar graphs that are not $(4,4)$-choosable: one such graph was given by Voigt in \cite{voigt1993list}. Moreover, there exist planar graphs that are not $(4,3)$-choosable: an example was given by Mirzakhani in \cite{mirzakhani1996small}, and another example is given in Section 3. The following question, however, remains open.

\begin{question}\label{42conj}
Is every planar graph $(4,2)$-choosable?
\end{question}

If we further restrict the structure of the underlying graph, more results in this area are known. For instance, Zhanar et al. showed in \cite{berikkyzy20174} that if $t \in \{5,6,7\}$ and $G$ is a graph that does not contain a chorded $t$-cycle\footnote{A \emph{chorded $t$-cycle} is a cycle of length $t$ with an additional edge between its vertices.}, then $G$ is $(4,2)$-choosable. 

In a similar vein as Question \ref{42conj}, \v{S}krekovski asked the following \cite{skrekovski2001note}.

\begin{question}\label{31conj}
Is every planar graph $(3,1)$-choosable?
\end{question}

Moreover, \v{S}krekovski showed that there exist planar graphs that are not $(3,2)$-choosable. Though Question \ref{31conj} remains unsolved, there has been a great deal of partial progress. As in the case of $(4,2)$-choosability, most results are obtained by further limiting the structure of the underlying planar graph.  For instance, Kratochv{\'\i}l, Tuza, and Voigt \cite{kratochvil1998brooks} showed that triangle-free planar graphs are (3,1)-choosable.  Choi, Lidick{\`y}, and Stolee show in \cite{choi2016choosability} that planar graphs with either no 4-cycles or no cycles of length 5 and 6 are (3,1)-choosable. Chen, Fan, Wang, and Wang show in \cite{chen2017sufficient}  that planar graphs with neither 6-cycles nor adjacent 4- and 5-cycles are (3, 1)-choosable.
Moreover, Chen, Ko-Wei, and Wang show in \cite{chen2018choosability} that planar graphs without 4-cycles adjacent to cycles with lengths in $\{3,4\}$ are (3, 1)-choosable.

Our first result concerns the $(4,2)$-choosability of planar graphs. Instead of further restricting the class of graphs, we further restrict the list assignment. To that end, we give the following convenient definition.

\begin{definition}
Let $G$ be a planar graph with $(4,2)$-list assignment $L$. An \emph{offensive triangle} $T$ is a triangle $T \subseteq G$ with $|\bigcap_{v \in V(T)} L(v)| = 2$.\end{definition}

Our first result is the following.

\begin{restatable}{thm}{main}\label{thm-main}
Let $G$ be a planar graph with $(4,2)$-list assignment $L$. If $G$ contains a clique that intersects every offensive triangle, then $G$ is $L$-colourable.
\end{restatable}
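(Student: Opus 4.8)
The plan is to prove the statement by strong induction on $|V(G)|$, after first strengthening it into an extension statement in the spirit of Thomassen's proof that planar graphs are $5$-choosable. The guiding observation is that the clique hypothesis is really a hypothesis that lets us \emph{precolour}: a clique in a planar graph has at most four vertices, and once it is properly $L$-coloured, every offensive triangle contains a coloured vertex. This converts offensive triangles from global obstructions (chains and cycles of offensive triangles are exactly what drives the known $(4,3)$- and $(4,4)$-counterexamples) into purely local constraints that can be absorbed into the lists of the two remaining vertices. Accordingly, I would formulate a lemma that allows the clique $H$ to be precoloured and placed on the outer face, permits reduced list sizes for vertices that are adjacent to $H$ or that lie on the outer boundary, retains the edge-separation condition $|L(u)\cap L(v)|\le 2$, and keeps the invariant that $H$ meets every offensive triangle; the target theorem is then the special case in which every list has size $4$ and nothing is yet coloured.

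Before the inductive step I would dispose of the standard reductions. We may assume $G$ is connected, since a component avoiding $H$ contains no offensive triangle and so is covered by the empty-clique instance of the lemma. Any vertex $v\notin H$ with $\deg(v)\le 3$ may be deleted and coloured last: removing $v$ neither creates offensive triangles nor invalidates the invariant, and $|L(v)|\ge 4$ exceeds the number of forbidden colours, so minimality forces minimum degree at least $4$ outside $H$. Cut vertices and small separations can be handled by splitting $G$ along $H$ or along a separating clique and colouring the pieces consistently; triangulating the bounded faces is available only across non-edges $uv$ with $|L(u)\cap L(v)|\le 2$, so I expect to triangulate selectively and argue directly about the faces where separation blocks the addition of an edge.

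For the inductive core I would choose a vertex $w$ adjacent to the precoloured clique (or on the outer boundary), delete or colour it, and push the resulting restriction into the lists of its neighbours, exploiting that by separation colouring a single vertex removes at most two colours from any neighbour's list. The colour assigned to $w$ must be selected so that (i) it extends the precolouring properly, (ii) it does not strand a neighbour with an empty list, and (iii) it does not leave an offensive triangle all of whose vertices are still uncoloured and disjoint from $H$. Point (iii) is where the offensive-triangle definition does the work: because the three lists of an offensive triangle share exactly two colours and pairwise meet in exactly those two colours, their structure is rigid enough that a single well-chosen colour on a shared vertex eliminates the obstruction, and the hypothesis that $H$ meets every offensive triangle guarantees such a shared vertex is always available to be coloured early.

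The main obstacle, I expect, is bookkeeping two competing invariants at once through the reduction: the list-size lower bounds near the precoloured part and the property that the (now possibly shrinking) coloured set still hits every offensive triangle. Coloring a vertex adjacent to several clique vertices can cost up to three colours from a neighbour's list, which is tight against the requirement of a usable colour, and the planarity bound that no vertex is adjacent to all of a $K_4$ must be used to keep this from failing. The delicate case analysis will concentrate on offensive triangles sharing an edge with $H$ or with the boundary, where the separation parameter of only $2$ leaves the least slack; establishing the correct strengthened list-size invariant so that every one of these cases goes through is, I believe, the crux of the argument.
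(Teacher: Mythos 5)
Your high-level architecture matches the paper's: a Thomassen-style extension lemma proved by induction, with the clique hypothesis used to precolour, so that every offensive triangle acquires a precoloured vertex and ceases to be an obstruction. Two structural remarks before the main issue. First, the paper's auxiliary lemma (Theorem \ref{thm-inductive}) does \emph{not} carry the clique or the ``$H$ meets every offensive triangle'' invariant through the induction, as you propose; it simply assumes there are \emph{no} offensive triangles. The clique enters only once, at the top level: colour $H$, delete one of its vertices, re-embed so its neighbourhood lies on the outer face, and observe that the residual graph has no offensive triangles because every such triangle now contains a vertex whose list has been cut to size one. This is cleaner than maintaining a precoloured clique as an invariant, and it sidesteps the problem (which you would otherwise have to solve) of the clique not lying on the outer face \textemdash{} the paper handles that separately by splitting along the chord of $H$, or along the four faces of $H$ when $|V(H)|=4$. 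Second, your minimum-degree reduction is a dead end: planar graphs need not have vertices of degree at most $3$, and in the strengthened lemma the boundary vertices have lists of size $3$ or $1$, so low-degree vertices cannot in general be deleted and coloured last.

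The genuine gap is in the inductive core. You propose to colour a single vertex $w$ at a time and you assert that ``colouring a single vertex removes at most two colours from any neighbour's list''; in fact colouring one vertex removes at most one colour from each neighbour, and the single-vertex reduction does not close the induction: after colouring a boundary vertex $v_2$ from its $3$-list, the next boundary vertex $v_3$ drops to a $2$-list, below the invariant. The actual engine of the proof is a \emph{two-vertex} step: one shows the lists are forced to satisfy $L(v_2)=\{\phi(v_1),a,b\}$ and $\{a,b\}\subseteq L(v_3)$ with some colour of $\{a,b\}$ absent from $L(v_4)$ (this uses the separation bound $|L(v_3)\cap L(v_4)|\le 2$), and then colours $v_3\mapsto a$, $v_2\mapsto b$ simultaneously. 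A common interior neighbour $v$ of $v_2$ and $v_3$ has a $4$-list and stands to lose both $a$ and $b$, which would break the induction \textemdash{} and $\{a,b\}\subseteq L(v)\cap L(v_2)\cap L(v_3)$ happens precisely when $vv_2v_3$ is an offensive triangle. This is the one and only place the no-offensive-triangle hypothesis is used, and your proposal never identifies it; your point (iii) about not ``leaving'' offensive triangles among uncoloured vertices is vacuous once $H$ is precoloured and does not describe the real danger. Without the two-vertex step and the observation that offensive triangles are exactly the configurations in which a common neighbour loses two colours, the induction as you describe it fails.
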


It is our hope that, should the answer to Question \ref{42conj} be \say{no}, Theorem \ref{thm-main} and similar results will help guide the construction of a witness. As an immediate corollary, we obtain the following theorem, which is the main result of this paper.
\begin{thm}\label{notriangles}
If $G$ is a planar graph with $(4,2)$-list assignment $L$ and $G$ contains no offensive triangles, then $G$ is $L$-colourable.
\end{thm}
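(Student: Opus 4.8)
The plan is to derive Theorem~\ref{notriangles} directly from Theorem~\ref{thm-main} by exhibiting a clique of $G$ that intersects every offensive triangle. The key observation is that the hypothesis ``$G$ contains no offensive triangles'' renders the clique condition of Theorem~\ref{thm-main} vacuous: any subgraph $H \subseteq G$ satisfies $V(H) \cap V(T) \neq \emptyset$ for every offensive triangle $T$, simply because there are no offensive triangles $T$ left to check. Thus the whole argument reduces to producing \emph{some} clique and then invoking the stronger theorem.

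Concretely, I would first dispose of the degenerate case $V(G) = \emptyset$, in which $G$ is $L$-colourable trivially. Otherwise I would pick an arbitrary vertex $v \in V(G)$ and take $H$ to be the single-vertex subgraph on $\{v\}$, which is a copy of $K_1$ and hence a clique of $G$. Since $G$ has no offensive triangles, $H$ vacuously intersects every offensive triangle, so the hypotheses of Theorem~\ref{thm-main} are satisfied for $G$, $L$, and $H$. Applying Theorem~\ref{thm-main} then yields immediately that $G$ is $L$-colourable.

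I do not expect any genuine obstacle here: all of the combinatorial difficulty is already absorbed into Theorem~\ref{thm-main}, and the corollary amounts only to verifying that its hypotheses hold in this (vacuously easier) setting. The one point requiring a moment of care is the convention for what counts as a clique, namely ensuring that the chosen $H$ is a legitimate clique of $G$ rather than the empty subgraph; this is handled by selecting a single vertex whenever $G$ is nonempty.
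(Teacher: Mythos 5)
Your proposal is correct and matches the paper's own derivation: the paper obtains Theorem~\ref{notriangles} as an immediate corollary of Theorem~\ref{thm-main} (and also of Theorem~\ref{thm-inductive}), precisely because the clique condition is vacuous when no offensive triangles exist. Your extra care in choosing a single-vertex clique and handling the empty graph is harmless and fine.
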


We prove Theorem \ref{thm-main} via a stronger theorem that further restricts the lists of vertices in the plane graph \textemdash and in particular, the lists of the vertices in the outer face boundary. The proof is inspired by Thomassen's famous and elegant proof of the 5-choosability of planar graphs \cite{thomassen5LC}.

\begin{restatable}{thm}{inductive}\label{thm-inductive}
Let $G$ be a plane graph, and let $C$ be the subgraph of $G$ whose edge-set and vertex-set are precisely those of the boundary walk of the outer face of $G$. Let $P\subseteq C$ be a path of length at most one. Let $L$ be a list assignment for $G$ where:
\begin{itemize}
    \item $|L(v)| \geq 4$ for all $v \in V(G) \setminus V(C)$,
    \item $|L(v)| \geq 1$ for all $v \in V(P)$,
    \item $|L(v)| \geq 3$ for all $v \in V(C) \setminus V(P)$, and
    \item $|L(v) \cap L(u)| \leq 2$ for each $uv \in E(G)$.  
\end{itemize}
 If $G$ contains no offensive triangles, then every $L$-colouring of $P$ extends to an $L$-colouring of $G$.
\end{restatable}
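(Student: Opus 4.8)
The plan is to prove Theorem~\ref{thm-inductive} by induction on $|V(G)|$, following the template of Thomassen's proof that planar graphs are $5$-choosable but adapted to the $(4,2)$ regime. Throughout, the two local consequences of the hypotheses do the real work: for every edge $uv$ we have $|L(u)\cap L(v)|\le 2$ (separation), and for every triangle $uvw$ the absence of offensive triangles forces $|L(u)\cap L(v)\cap L(w)|\le 1$, since the triple intersection lies in $L(u)\cap L(v)$, which already has size at most $2$, and equality to $2$ is precisely what ``offensive'' forbids. The base cases are immediate: if $V(G)=V(P)$ there is nothing to extend, and sufficiently small graphs are handled directly.

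Next I would carry out the standard structural reductions that simplify $C$; note that $C$ is taken to be the \emph{boundary walk} of the outer face, exactly so that the induction survives deletions that produce cut vertices or non-simple boundaries. First, if $G$ is disconnected, has a cut vertex, or its outer walk is not a cycle, I would split at the cut structure and colour the pieces in turn, transferring the already-chosen colour on a shared vertex into the precoloured data of the next piece; each piece is a smaller instance inheriting interior lists of size $\ge 4$, boundary lists of size $\ge 3$, precoloured lists of size $\ge 1$, separation, and the no-offensive-triangle condition. Second, if the outer cycle has a chord $xy$, it splits $G$ into $G_1\supseteq P$ and $G_2$ sharing the edge $xy$; I would apply the inductive hypothesis to $G_1$ to extend the precolouring, then treat $xy$ as a precoloured path of length one in $G_2$ and apply the hypothesis again, the list sizes along $xy$ being fine because $x,y\in V(C)$ have lists of size $\ge 3$ before colouring and $\ge 1$ after. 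Hence I may assume $G$ is $2$-connected and $C$ is a chordless cycle, and I write $P=p_1p_2$ (the cases $|V(P)|\le 1$ reduce to this or are easier).

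For the main case I would delete a boundary vertex $v$ adjacent to $p_2$ with $v\ne p_1$, listing its neighbours in rotation from the edge $vp_2$ to the other cycle-edge $vv'$ as $p_2=u_0,u_1,\dots,u_s,u_{s+1}=v'$; because $C$ is chordless, $u_1,\dots,u_s$ are interior, so $|L(u_i)|\ge 4$. Writing $S_i:=L(v)\cap L(u_i)$, separation gives $|S_i|\le 2$, while the no-offensive-triangle condition applied to the fan triangle $vu_iu_{i+1}$ gives $|S_i\cap S_{i+1}|\le 1$, so consecutive two-element intersections are distinct pairs. The goal is to commit $v$ to a colour and pass to $G-v$, and there are two finishing moves. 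In the single-colour move, if there is $c\in L(v)\setminus\{\phi(p_2)\}$ with $c\notin L(v')$, I set $\phi(v)=c$, delete $c$ from each interior $u_i$ (each dropping to size $\ge 3$), and apply the inductive hypothesis to $G-v$; the chosen colour is then automatically compatible with $p_2$, the $u_i$, and $v'$. In the two-colour move, if there is a pair $\{c_1,c_2\}\subseteq L(v)\setminus\{\phi(p_2)\}$ contained in no single $L(u_i)$, I delete from each $u_i$ whichever of $c_1,c_2$ it contains (at most one, so $u_i$ stays $\ge 3$), apply the hypothesis to $G-v$, and finally colour $v$ with whichever of $c_1,c_2$ differs from $\phi(v')$. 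In both moves the resulting instance satisfies every hypothesis, so the induction closes.

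The main obstacle is precisely that interior lists have size $4$ rather than $5$: I can no longer afford to delete two colours from an interior neighbour, so the entire difficulty is proving that one of the two finishing moves always applies. A short analysis shows both can fail only in a rigid configuration, namely when $|L(v)|=3$ and $\phi(p_2)\in L(v)$ with the two remaining colours $\{d,e\}=L(v)\setminus\{\phi(p_2)\}$ both lying in $L(v')$ (blocking the single-colour move) and with some interior $u_i$ satisfying $\{d,e\}\subseteq L(u_i)$ (blocking the two-colour move). Here the local facts must be pushed further: the fan triangle $vu_sv'$ already excludes $u_i=u_s$, since otherwise $\{d,e\}$ would lie in $L(v)\cap L(u_s)\cap L(v')$, an offensive triangle; and I would eliminate the remaining positions using the consecutive-distinct-pairs structure of the $S_i$ together with the symmetric freedom to run the same deletion from the $p_1$-end of $P$, or a local recolouring of the offending neighbour. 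Pinning down this colour bookkeeping, and doing so \emph{without} Thomassen's luxury of first triangulating the interior (adding edges can create offensive triangles or destroy separation, which is exactly why $C$ is tracked as a boundary walk rather than a cycle so that the deletion step can be iterated on whatever boundary it produces), is the crux of the argument.
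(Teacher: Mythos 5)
Your reductions (2-connectivity, chordless outer cycle, forcing $\phi(p_2)\in L(v)$, and the single-colour deletion move) track the paper's proof closely, and your analysis of when both of your finishing moves fail is accurate: the only obstruction is $L(v)=\{\phi(p_2),d,e\}$ with $\{d,e\}\subseteq L(v')$ and $\{d,e\}\subseteq L(u_i)$ for some interior fan neighbour $u_i$. But you stop exactly there, and the tools you gesture at will not close this case. The ``consecutive distinct pairs'' constraint only says $S_i\ne S_{i+1}$ when both have size two; it does not prevent a single interior $u_i$ with $i<s$ from containing both $d$ and $e$, since $u_i$ is not adjacent to $v'$ and so no offensive triangle is created. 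Running the argument from the $p_1$-end just produces a symmetric stuck configuration at the other end of $C$, and ``local recolouring of the offending neighbour'' is not an argument. So the proposal has a genuine gap at what you yourself call the crux.

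The missing idea is to give up on deleting a single vertex and instead colour and delete \emph{two} consecutive boundary vertices at once. In the stuck configuration you have $\{d,e\}\subseteq L(v)\cap L(v')$, hence $L(v)\cap L(v')=\{d,e\}$ exactly, and (after one more single-colour reduction showing the third colour of $L(v')$ must reappear on the next cycle vertex $v''$) separation forces one of $d,e$ --- say $d$ --- to be absent from $L(v'')$. Now set $\phi(v')=d$ and $\phi(v)=e$ and delete both. A vertex adjacent to only one of $v,v'$ loses at most one colour; a vertex $u$ adjacent to both forms a triangle with $v$ and $v'$, and since $\{d,e\}\subseteq L(v)\cap L(v')$ the no-offensive-triangle hypothesis forbids $\{d,e\}\subseteq L(u)$, so $u$ also loses at most one colour. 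This is precisely where the hypothesis earns its keep, and it is the step your single-vertex framework cannot reach: your problematic $u_i$ containing both $d$ and $e$ is adjacent only to $v$, so it loses only $e$. Everything before this point in your write-up is salvageable, but without this (or an equivalent) finishing move the induction does not close.
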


We note that Theorem \ref{notriangles} is also an immediate corollary of Theorem \ref{thm-inductive}.

Our next main results concern the \emph{correspondence colouring} (also known as \emph{DP-colouring}) versions of Questions \ref{42conj} and \ref{31conj}.  Correspondence colouring is a generalization of list colouring first introduced by Dvo{\v{r}}{\'a}k and Postle in \cite{dvovrak2018correspondence}. A definition is given below.

\begin{definition}
 Let $G$ be a graph. A \emph{correspondence assignment $(L,M)$ for $G$} is a list assignment $L$ together with a function $M$ that assigns to every edge $e = uv \in E(G)$ a partial matching $M_{e}$ between $\{u\}\times L(u)$ and $\{v\}\times L(v)$.
An $(L,M)$-colouring of $G$ is a function $\varphi$ that assigns to each vertex $v \in V(G)$ a colour $\varphi(v) \in L(v)$ such that for every $e = uv \in E(G)$, we have that $(u, \varphi(u))(v, \varphi(v)) \not \in M_e$. We say that $G$
is $(L,M)$-colourable if such a colouring exists.
\end{definition}

Informally, we think of correspondence colouring as a version of list colouring where the vertices may disagree on the meanings of colours: in list colouring, if a vertex is coloured blue, then its neighbours must not be coloured blue. In the correspondence colouring framework, if a vertex is coloured blue, that might enforce that one of its neighbours not be coloured red; that another not be coloured green; and so on.

We define the correspondence analogue of $(\ell,k)$-choosability as follows.

\begin{definition}
Let $G$ be a graph and $(L,M)$, a correspondence assignment for $G$. We say $(L,M)$ is an \emph{$(\ell, k)$-correspondence assignment} if $L$ is an $\ell$-list assignment and for every edge $e \in E(G)$, we have $|M_e|\leq k$. We say $G$ is \emph{$(\ell, k)$-correspondence choosable} if $G$ is $(L,M)$-colourable for every $(\ell,k)$-correspondence assignment $(L,M)$.
\end{definition}

 Given an $(\ell,k)$-correspondence assignment, we have (as in the list colouring version) that for each edge $uv \in E(G)$ there are at most $k$ colours in $L(u)$ that rule out a colour option for $v$.  In \cite{dvovrak2021single}, Dvo{\v{r}}{\'a}k,  Esperet,  Kang, and Ozeki further investigate the notion of $(k,1)$-correspondence choosability of embedded graphs, and generalize the study to multigraphs.

It is natural to ask whether theorems concerning list colouring with separation carry over to the realm of correspondence colouring.  The proof given in \cite{kratochvil1998brooks} that every planar graph is $(4,1)$-choosable also holds for correspondence colouring, and any counterexample for planar $(4,3)$-choosability is of course a counterexample for planar $(4,3)$-correspondence choosability.  Moreover, in \cite{hell2008adaptable}, Zhu and Hell give a construction of a planar graph that has adaptable chromatic number 4, and as such is not $(3,1)$-correspondence choosable. Thus for planar graphs the only remaining case for correspondence choosability with separation is the question of $(4,2)$-correspondence choosability.

In this paper, we show that the answer to the correspondence colouring analogue of Question \ref{42conj} is \say{no}.
 
\begin{restatable}{thm}{x42}\label{ctx42}
There exist planar graphs that are not $(4,2)$-correspondence choosable.
\end{restatable}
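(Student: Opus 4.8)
The plan is to exhibit a single planar graph $G$ together with a $(4,2)$-correspondence assignment $(L,M)$ that admits no $(L,M)$-colouring. Theorem \ref{notriangles} tells us exactly where such an example must hide: in the list setting an offensive triangle is the only local obstruction, and the clique condition of Theorem \ref{thm-main} suffices to neutralise it. The reason correspondence colouring should behave differently is that the matchings on the three edges of a triangle can be \emph{twisted}, so that composing them around the triangle yields a nontrivial ``holonomy''; this reintroduces a cyclic inconsistency that the identity-type intersections of a genuine list assignment can never produce, and which no hitting clique can repair. My strategy is therefore to build $G$ modularly out of small planar gadgets whose twisted matchings encode such inconsistencies, and to combine them so that a single root vertex is left with no admissible colour.

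Concretely, I would first prove a gadget lemma: for each colour $i \in \{1,2,3,4\}$ there is a planar graph $H_i$ with a distinguished root vertex $r$ on its outer face and a $(4,2)$-correspondence assignment (with $L(r) = \{1,2,3,4\}$) such that a colouring $\varphi(r) = c$ of the root extends to an $(L,M)$-colouring of $H_i$ \emph{if and only if} $c \neq i$. By relabelling colours through the edge-matchings it suffices to construct $H_1$ and obtain $H_2, H_3, H_4$ by symmetry. To build $H_1$ I would set up a near-triangulated fan of triangles rooted at $r$, choosing the matchings (each of size at most two) so that the choice $\varphi(r) = 1$ propagates forced constraints inward, each subsequent vertex losing options, until some interior vertex has all four of its colours excluded, while the choices $\varphi(r) \in \{2,3,4\}$ leave enough slack to colour the gadget greedily from the inside out.

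With the gadgets in hand, the assembly is immediate. I would glue $H_1, H_2, H_3, H_4$ along their common root $r$, placing each $H_i$ in its own region of the plane so that the union remains planar (the gadgets meet only at $r$). In any $(L,M)$-colouring of the resulting graph $G$, the root takes some colour $j \in \{1,2,3,4\}$; but then the gadget $H_j$ cannot be coloured, by the gadget lemma. Hence $G$ has no $(L,M)$-colouring, and since $G$ is planar and $(L,M)$ is a $(4,2)$-correspondence assignment (every list has size $4$ and every matching has size at most $2$, which I would verify edge by edge), this proves Theorem \ref{ctx42}.

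The main obstacle is the gadget lemma itself, and specifically finding the smallest planar $H_1$ that genuinely forbids exactly one root colour using only matchings of size at most two. Because a single neighbour, once coloured, rules out at most one colour at an adjacent vertex, forcing the inward propagation to annihilate all four options at an interior vertex requires carefully engineered branching, and one must check both directions: that $\varphi(r)=1$ always fails and that $\varphi(r) \in \{2,3,4\}$ always succeeds. I expect this to come down to a structured but finite case analysis on the possible colourings of the gadget's interior, together with a routine planarity check of the final four-fold assembly; everything outside the gadget construction is bookkeeping.
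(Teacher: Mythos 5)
Your outer shell---build, for each possible root colour, a planar gadget that kills exactly that colour, then amalgamate the gadgets at the root---is a standard and sound assembly step, and it mirrors the paper's strategy in spirit (the paper amalgamates $16$ gadgets at \emph{two} roots $u_1,u_2$, one gadget per pair in $\{7,\dots,10\}\times\{11,\dots,14\}$, each gadget ruling out one specific pair of root colours). But the entire mathematical content of Theorem \ref{ctx42} lives in the gadget lemma, and you have not proved it: you describe a ``near-triangulated fan'' and assert that matchings can be chosen so that $\varphi(r)=1$ propagates to a contradiction while $\varphi(r)\in\{2,3,4\}$ extends, deferring the construction and both directions of the verification to ``a structured but finite case analysis.'' Without an explicit gadget and that verification there is no proof. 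Note also that your single-root gadget forbidding exactly one of four colours is a strictly \emph{stronger} object than what the paper constructs (whose gadget only needs to fail when both roots receive one prescribed pair of colours, with each root's list being a singleton inside the gadget); you give no evidence that the stronger object exists under the $|M_e|\le 2$ constraint, where a coloured neighbour deletes at most one option and forcing all four options off an interior vertex is exactly the hard part.

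Your guiding intuition is also aimed at a different mechanism than the one the paper actually exploits. You attribute the failure of $(4,2)$-correspondence choosability to ``twisted'' matchings whose composition around a triangle has nontrivial holonomy. The paper's construction uses no twisting at all: every matching is of the form $M_{uv}=\{(u,c)(v,c):c\in S\}$ for some set $S$ of common colour names with $|S|\le 2$. The real source of power is that two lists may share \emph{three} colours (e.g.\ $L(v_3)=\{a,b,1,2\}$ and $L(v_8)=\{a,1,2,6\}$) while the matching identifies only two of them, so a known gadget of Voigt and Wirth that is not $(4,3)$-list-colourable becomes a legitimate $(4,2)$-\emph{correspondence} assignment. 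If you pursue your plan, you will likely find it easier to follow that route---reuse a list-colouring gadget and shrink the matchings---than to engineer holonomy from scratch; either way, the explicit gadget and its case analysis must be supplied before this constitutes a proof.
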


%\begin{restatable}{thm}{x31}\label{ctx31}
%There exist planar graphs that are not $(3,1)$-correspondence choosable.
%\end{restatable}

In addition to being interesting in its own right, Theorem \ref{ctx42} helps inform the structure of potential proofs of the $(4,2)$-choosability of planar graphs. In particular, any such proof will have to take into consideration more than merely the number of colours available to each vertex, as otherwise it would also hold for correspondence colouring.  

Section 2 will be devoted to proofs of Theorems \ref{thm-inductive} and \ref{thm-main}. In Section 3, we will give an explicit construction of a graph and correspondence assignment proving Theorem \ref{ctx42}.

\section{Proving Theorems \ref{thm-inductive} and \ref{thm-main}}\label{minctx}
In this section, we prove Theorems \ref{thm-inductive} and \ref{thm-main}. In what follows, we denote by $N_G(v)$ the set of vertices in a graph $G$ that are adjacent to a vertex $v \in V(G)$.

We begin with Theorem \ref{thm-inductive}, restated for convenience below. 

\inductive*

\begin{proof}

Suppose not. Let $G$ be a counterexample to Theorem \ref{thm-inductive} that is minimum with respect to $|V(G)|+|E(G)|$; and, subject to that, with respect to $\sum_{v \in V(G)} |L(v)|$. It is easy to verify that the theorem holds if $|V(G)| \leq 2$, so we may assume that $|V(G)| \geq 3$. By our choice of $G$, we may assume further that every vertex $v \in V(G) \setminus V(C)$ has $|L(v)| =4$; that every vertex $v \in  V(C)$ has $|L(v)| \leq 3$; that $|V(P)| = 2$; and that $|L(v)| = 1$ for each $v \in V(P)$. Finally, we may assume that $P$ has an $L$-colouring as otherwise there is nothing to prove.

We proceed via a series of claims establishing the structure of $G$ and the lists of a subset of the vertices in $V(C)$.

\begin{claim}\label{2-conn}
$G$ is 2-connected.
\end{claim}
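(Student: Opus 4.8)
The plan is to show that if $G$ had a cut (either a cut vertex or a separation into components), we could split the colouring problem into strictly smaller instances, solve each by minimality, and glue the results, contradicting the choice of $G$. So I would suppose $G$ is not $2$-connected; since $|V(G)|\geq 3$, this means $G$ is disconnected or has a cut vertex. In every case I write $G = G_1 \cup G_2$ with $G_1,G_2$ proper subgraphs, $G_1 \cup G_2 = G$, and $|V(G_1)\cap V(G_2)| \leq 1$, where the shared vertex (if any) is a cut vertex $v$.

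The first thing to pin down is why the pieces inherit the list-size hypotheses. The one embedding fact I need is that for a subgraph $H \subseteq G$ with the inherited plane embedding, the outer face of $G$ is contained in the outer face of $H$; hence every vertex incident to the outer face of $G$ remains incident to the outer face of $H$. It follows that no vertex of $V(C)$ — which carries a list of size at least $3$, or is a precoloured vertex of $P$ — can become an \emph{interior} vertex of a piece; the only vertices interior to a piece were already interior to $G$, where their lists have size $4$. Together with the facts that subgraphs of $G$ have no offensive triangles and that the separation condition $|L(u)\cap L(v)|\le 2$ is preserved under passing to subgraphs and shrinking lists, this shows each piece (with its inherited embedding) again satisfies the hypotheses of the theorem. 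This is the routine part.

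The substantive step is to choose the split so that the precolouring can be propagated across $v$. For a cut vertex $v$ I would arrange that $V(P)\subseteq V(G_1)$ and that $v$ lies on the outer face boundary of $G_2$. Viewing the pieces as unions of the branches at $v$ (a branch being $v$ together with one component of $G-v$), the key observation is that at most one branch can enclose $v$, i.e. fail to have $v$ on its outer boundary, and that if such a branch exists then it must contain the entire outer boundary walk $C$, and hence all of $P$. Therefore letting $G_2$ be any branch other than the one containing $V(P)\setminus\{v\}$ forces both $P\subseteq G_1$ and $v$ on the outer boundary of $G_2$. By the minimality of $G$, the (strictly smaller) graph $G_1$ satisfies the conclusion, so the given colouring of $P$ extends to an $L$-colouring $\phi_1$ of $G_1$. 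Replacing the list of $v$ by $\{\phi_1(v)\}$ and taking the single boundary vertex $v$ as the precoloured path, minimality applied to $G_2$ yields a colouring $\phi_2$ with $\phi_2(v)=\phi_1(v)$. Since $G_1$ and $G_2$ meet only in $v$ and every edge lies in one of them, $\phi_1\cup\phi_2$ is a proper $L$-colouring of $G$ extending $P$, a contradiction. The disconnected case is identical but easier: $P$ lies in one component, which is coloured by minimality, while every other component is coloured after arbitrarily precolouring one of its boundary vertices.

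I expect the main obstacle to be exactly the embedding bookkeeping in the cut-vertex case: one must be certain the detached branch $G_2$ really has $v$ on its outer boundary in the inherited embedding, since otherwise $v$ is not a legal boundary vertex to precolour when invoking the theorem on $G_2$. A wheel whose hub is an interior cut vertex shows that a branch can indeed enclose $v$; the argument therefore hinges on the observation that any such enclosing branch is the one carrying the outer boundary walk, and so is never the branch we choose to detach.
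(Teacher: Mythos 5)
Your proof is correct and follows essentially the same route as the paper's: split $G$ at the cut vertex (or into components), extend the precolouring to the piece containing $P$ by minimality, then transfer the colour of the cut vertex as a singleton list to the other piece and apply minimality again. The only difference is that you explicitly verify the embedding bookkeeping (that the detached branch has the cut vertex on its outer face, via the observation that at most one branch can enclose it and that branch must carry $C$ and hence $P$), a point the paper's proof passes over silently but which your argument correctly settles.
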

\begin{proof}
Suppose not. If $G$ is disconnected, then by the minimality of $G$ each component of $G$ admits an $L$-colouring, a contradiction. Thus we may assume that $G$ is connected and contains a cut-vertex $u$. Let $G_1$ and $G_2$ be connected subgraphs of $G$ with $G_1 \cap G_2 = u$; with $V(G) \setminus V(G_i) \neq \emptyset$ for each $i \in \{1,2\}$; and with $G_1 \cup G_2 = G$. Without loss of generality, we may assume that $P \subseteq G_1$. Let $\phi$ be an $L$-colouring of $P$. By the minimality of $G$, we have that $\phi$ extends to an $L$-colouring of $G_1$. Let $L'$ be a list assignment for $G_2$ with $L'(u) = \{\phi(u)\}$ and $L'(v) = L(v)$ for all $v \in V(G_2) \setminus \{u\}$. By the minimality of $G$, we have that $G_2$ admits an $L'$-colouring $\phi'$. But then $\phi \cup \phi'$ is an $L$-colouring of $G$, a contradiction.  
\end{proof}

It follows from Claim \ref{2-conn} that $C$ is a cycle. Let $C = v_0v_1\cdots v_{k-1}v_0$, where $P=v_0v_1$. In what follows, all indices are understood modulo $k$. 

\begin{claim}\label{chordless}
$C$ is chordless.
\end{claim}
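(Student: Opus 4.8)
The plan is to argue by contradiction, exploiting the minimality of $G$ by cutting it along a hypothetical chord into two strictly smaller graphs and colouring each in turn. Suppose $C$ has a chord $e = v_iv_j$, so that $v_i$ and $v_j$ are non-consecutive on $C$; in particular $\{v_i,v_j\}\neq\{v_0,v_1\}$, since $v_0v_1 = P$ is an edge of $C$. Drawn in the interior of the disk bounded by $C$, the chord $e$ separates that disk into two regions, so $G = G_1 \cup G_2$, where $G_1$ and $G_2$ are the subgraphs of $G$ lying in the two closed regions. By planarity, $G_1 \cap G_2$ is exactly the edge $e$ together with its endpoints $v_i,v_j$ (no edge of $G$ can join the interiors of the two regions without crossing $e$), and each $G_m$ is a plane graph whose outer boundary is a cycle $C_m$ consisting of one of the two arcs of $C$ between $v_i$ and $v_j$ together with $e$. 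Since $e$ is a chord, each arc contains at least one internal vertex, so both $G_1$ and $G_2$ are strictly smaller than $G$ in $|V|+|E|$, which is what permits the appeal to minimality. As $P$ is an edge of $C$, its vertices lie on a single one of the two arcs; relabelling if necessary, I would take $P \subseteq C_1$.

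Next I would colour $G_1$. As a subgraph of $G$ it has no offensive triangles, its interior vertices retain lists of size $4$, and every vertex of $C_1$ lies on $C$, hence has a list of size exactly $3$ (or size $1$ if it is $v_0$ or $v_1$). Thus $(G_1,L)$ with the same distinguished path $P$ satisfies all the hypotheses of Theorem \ref{thm-inductive}, and by minimality the given $L$-colouring of $P$ extends to an $L$-colouring $\phi_1$ of $G_1$. In particular $\phi_1$ colours $v_i$ and $v_j$, and since $e \in E(G_1)$ we have $\phi_1(v_i)\neq\phi_1(v_j)$.

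I would then colour $G_2$, this time treating $e=v_iv_j$ as the precoloured path. Define $L'$ on $V(G_2)$ by $L'(v_i)=\{\phi_1(v_i)\}$, $L'(v_j)=\{\phi_1(v_j)\}$, and $L'(v)=L(v)$ otherwise. Again $G_2$ has no offensive triangles, its interior vertices have lists of size $4$, and the remaining vertices of $C_2$ lie on $C$ and so have lists of size $3$; with $P'=e$ as the distinguished path of length one, the hypotheses of Theorem \ref{thm-inductive} hold for $(G_2,L')$. By minimality the (proper, as $\phi_1(v_i)\neq\phi_1(v_j)$) precolouring of $P'$ extends to an $L'$-colouring $\phi_2$ of $G_2$. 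Since $\phi_1$ and $\phi_2$ agree on $\{v_i,v_j\}=V(G_1)\cap V(G_2)$ and $G$ has no edge joining $V(G_1)\setminus\{v_i,v_j\}$ to $V(G_2)\setminus\{v_i,v_j\}$, their union is an $L$-colouring of $G$ extending the colouring of $P$, contradicting that $G$ is a counterexample. Hence $C$ is chordless.

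The main thing to be careful about — the \say{obstacle}, such as it is — is the topological bookkeeping: verifying that the chord genuinely splits $G$ into two pieces meeting only in $e$, that both pieces are strictly smaller, and that each inherits exactly the list-size profile demanded by Theorem \ref{thm-inductive} (in particular that the chord endpoints carry lists of size $3$ in the first application and size $1$ in the second, and that absence of offensive triangles passes to subgraphs). Once these are checked, the rest is a routine double application of the inductive hypothesis.
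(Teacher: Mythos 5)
Your argument is correct and is essentially the paper's own proof: split $G$ along the chord into $G_1 \cup G_2$ meeting only in $v_iv_j$, colour the piece containing $P$ by minimality, then recolour the other piece with the chord as the new precoloured path of length one. Your version just spells out the topological and list-size bookkeeping that the paper leaves implicit.
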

\begin{proof}
Suppose not, and let $v_iv_j$ be a chord of $C$. Let $G_1$ and $G_2$ be connected subgraphs of $G$ with $G_1 \cap G_2 = v_iv_j$; with $V(G) \setminus V(G_i) \neq \emptyset$ for each $i \in \{1,2\}$; and with $G_1 \cup G_2 = G$. Without loss of generality, we may assume that $P \subseteq G_1$. By the minimality of $G$, we have that $G_1$ admits an $L$-colouring $\phi$. Let $L'(v) = \{\phi(v)\}$ for each $v \in \{v_i, v_j\}$, and let $L'(v) = L(v)$ for each $v \in V(G_2) \setminus \{v_i, v_j\}$. By the minimality of $G$, we have that $G_2$ admits an $L$-colouring $\phi'$. But $\phi \cup \phi'$ is an $L$-colouring of $G$, a contradiction.
\end{proof}

Without loss of generality, we may assume that $L(v_0) = \{1\}$ and $L(v_1) = \{2\}$.
\begin{claim}
$2 \in L(v_2)$.
\end{claim}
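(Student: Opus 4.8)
The plan is to argue by contradiction inside the minimal counterexample, exploiting the minimality of $G$ with respect to $|V(G)|+|E(G)|$ by deleting a single edge. Suppose $2 \notin L(v_2)$. Since $v_1v_2$ is an edge of the outer cycle $C$, I would form the plane graph $G' = G - v_1v_2$ obtained by deleting just this edge, so that $|V(G')|+|E(G')| < |V(G)|+|E(G)|$. The point of the hypothesis $2 \notin L(v_2)$ is that the deleted edge will impose no constraint when we pass back from $G'$ to $G$, because $v_1$ is forced to receive colour $2$ while $v_2$ can never receive it.

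First I would verify that $G'$, equipped with the same list assignment $L$ and the same precoloured path $P = v_0v_1$, satisfies every hypothesis of Theorem~\ref{thm-inductive}. Writing $C'$ for the subgraph carried by the boundary walk of the outer face of $G'$, deleting $v_1v_2$ only enlarges the outer face, so $v_0v_1$ remains incident to it; hence $v_0, v_1 \in V(C')$, $v_0v_1 \in E(C')$, and $P \subseteq C'$ is still a path of length one. Every vertex of $C'$ is either a vertex of $C$ (with list of size at least $3$, or of size $1$ if it lies in $P$) or a vertex formerly interior to $G$ that has become exposed (with list of size $4$), so the list-size conditions on $V(G')\setminus V(C')$, on $V(P)$, and on $V(C')\setminus V(P)$ all hold. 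The separation inequality $|L(u)\cap L(v)|\leq 2$ is inherited verbatim, and since every triangle of $G'$ is a triangle of $G$ with the same lists, $G'$ inherits the absence of offensive triangles.

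Thus $G'$ is a strictly smaller instance, and by the minimality of $G$ it is not a counterexample: the (forced) colouring $\phi_0$ of $P$ with $\phi_0(v_0)=1$ and $\phi_0(v_1)=2$ therefore extends to an $L$-colouring $\phi$ of $G'$. The only edge of $G$ missing from $G'$ is $v_1v_2$, and since $\phi(v_1)=2 \notin L(v_2)$ while $\phi(v_2)\in L(v_2)$, we have $\phi(v_1)\neq\phi(v_2)$. Hence $\phi$ is a proper $L$-colouring of all of $G$, contradicting the assumption that $G$ is a counterexample. This contradiction gives $2 \in L(v_2)$.

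The step I expect to require the most care is the verification that $G'$ genuinely meets the hypotheses of Theorem~\ref{thm-inductive} --- in particular that $P$ still lies on the boundary walk $C'$ after an outer-cycle edge is removed, and that the partition of vertices into \say{interior} (list $\geq 4$), \say{path} (list $\geq 1$), and \say{remaining boundary} (list $\geq 3$) is preserved once some interior vertices possibly join the boundary. Everything else transfers immediately, since passing to a subgraph with the same lists can only destroy triangles and shrink list intersections, never create new constraints.
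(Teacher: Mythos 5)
Your proof is correct and follows exactly the paper's argument, which is stated there in one line ("If not, then $G-v_1v_2$ contradicts our choice of $G$"); you have simply carried out in full the verification that $G-v_1v_2$ satisfies the hypotheses of Theorem~\ref{thm-inductive} and that the extension lifts back to $G$. The details you flag as requiring care (the path $P$ remaining on the outer boundary, and formerly interior vertices joining the boundary with lists of size $4\geq 3$) are handled correctly.
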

\begin{proof}
If not, then $G-v_1v_2$ contradicts our choice of $G$.
\end{proof}

Let $L(v_2) = \{2, a, b\}$, where possibly $1 \in \{a,b\}$. 

\begin{claim}\label{abinLv4}
 $\{a,b\} \subseteq L(v_3)$.
\end{claim}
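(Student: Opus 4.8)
The plan is to argue by contradiction using the minimality of $G$, following the Thomassen-style strategy of colouring the first uncoloured cycle vertex and deleting it. Suppose the claim fails; since $a$ and $b$ play symmetric roles in $L(v_2)=\{2,a,b\}$ (which has size exactly $3$, as $v_2 \in V(C)\setminus V(P)$), I may assume $a \notin L(v_3)$. Let $\phi$ be the given $L$-colouring of $P$, so $\phi(v_0)=1$ and $\phi(v_1)=2$, and set $\phi(v_2)=a$; this is consistent since the only coloured neighbour of $v_2$ is $v_1$, coloured $2\neq a$. Put $G'=G-v_2$, let $C'$ be the boundary walk of its outer face, keep $P'=v_0v_1$, and define $L'(w)=L(w)\setminus\{a\}$ for every $w\in N_G(v_2)$ and $L'(w)=L(w)$ otherwise. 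If I can show $(G',C',P',L')$ satisfies the hypotheses of Theorem \ref{thm-inductive}, then by minimality $\phi|_{P'}$ extends to an $L'$-colouring $\phi'$ of $G'$; since each $w\in N_G(v_2)$ has $\phi'(w)\in L(w)\setminus\{a\}$, the map $\phi'$ together with $v_2\mapsto a$ is a proper $L$-colouring of $G$ extending $\phi$ on $P$, contradicting the choice of $G$.

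The list-size conditions are the crux of verifying the reduced instance, and the key point is that \emph{every} neighbour of $v_2$ lies on $C'$. Since $G$ is $2$-connected (Claim \ref{2-conn}) and $v_2$ lies on the outer face, deleting $v_2$ merges every face incident to $v_2$ — including the outer face — into the new outer face, whose boundary walk contains all of $N_G(v_2)$. Hence each interior neighbour $u$ of $v_2$, which had $|L(u)|=4$, becomes a vertex of $C'$ with $|L'(u)|\geq 3$, exactly meeting the boundary requirement; the cycle-neighbour $v_3$ keeps $|L'(v_3)|=|L(v_3)|\geq 3$ precisely because $a\notin L(v_3)$; and $v_1$ retains its list $\{2\}$ since $a\neq 2$. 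Vertices not adjacent to $v_2$ keep their lists, and deleting a vertex cannot turn a boundary vertex into an interior one, so the interior ($\geq 4$) and boundary ($\geq 3$) bounds persist, while $P'=v_0v_1$ remains a length-one subpath of $C'$.

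The remaining worry is that shrinking lists might create an \emph{offensive} triangle in $G'$, and this is the subtlety I would flag as decisive. For any triangle $T=xyz$ all three pairs are edges, so $|L(x)\cap L(y)|\leq 2$ and therefore $|\bigcap_{v\in V(T)}L(v)|\leq 2$ automatically. Because $G$ has no offensive triangle, every triangle of $G$ thus has triple intersection of size $0$ or $1$. Deleting the single colour $a$ from some lists can only lower a triple intersection by at most one, so every triangle of $G'$ still has triple intersection of size at most $1\neq 2$; no offensive triangle is created. The pairwise $\leq 2$ condition is inherited trivially by a subgraph with smaller lists.

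Finally, $G'$ is a strictly smaller plane graph, so minimality applies and the argument closes; the case $b\notin L(v_3)$ is identical, yielding $\{a,b\}\subseteq L(v_3)$. The main obstacle is the planarity bookkeeping in the second paragraph: one must be certain that \emph{all} neighbours of $v_2$, not merely its two cycle-neighbours, land on the boundary walk $C'$, so that the reduced lists of size $3$ are enough. Once that and the automatic bound $|\bigcap_{v\in V(T)}L(v)|\leq 2$ are in hand, the rest of the verification is routine.
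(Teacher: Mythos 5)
Your proof is correct and follows essentially the same route as the paper: colour $v_2$ with a colour of $\{a,b\}$ missing from $L(v_3)$, delete $v_2$, strip that colour from its neighbours' lists, and invoke minimality of $G$. The only slip is the assertion that $v_1$ is the sole coloured neighbour of $v_2$ --- when $k=3$ the vertex $v_3=v_0$ is also one --- but this is harmless since $a\notin L(v_3)=L(v_0)=\{1\}$ forces $a\neq 1$; your explicit verification that no offensive triangles are created is a detail the paper leaves implicit.
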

\begin{proof}
Suppose not. Let $c$ be a colour in $\{a,b\} \setminus L(v_3)$ (where recall the index is taken modulo $k$). Let $G'$ be the graph obtained from $G$ by deleting $v_2$, and let $L'$ be the list assignment for $G'$ with $L'(v) = L(v)$ for all $v \in V(G') \setminus N_G(v_2)$, and $L'(v) = L(v)\setminus \{c\}$ for all $v \in N_G(v_2)$. Let $C'$ be the subgraph of $G'$ whose edge-set and vertex-set are precisely those of the boundary walk of the outer face of $G'$. Note that by Claim \ref{chordless} and our choice of $c$, we have that $|L(v_3)| = |L'(v_3)|$ and $|L(v_2)| = |L'(v_2)|$. Moreover, for every vertex $v \in N_G(v_2) \setminus \{v_3, v_1\}$, we have that $|L'(v)| \geq 3$. Thus $G'$ and $L'$ satisfy the hypotheses of Theorem \ref{thm-inductive}, and so by the minimality of $G$, we have that $G'$ admits an $L'$-colouring $\phi$. But $\phi$ extends to an $L$-colouring of $G$ by setting $\phi(v_2) = c$, a contradiction.
\end{proof}
Since $|L(v_0)| = 1$, Claim \ref{abinLv4} implies the following.
\begin{claim}\label{kgeq4}
$k \geq 3$.
\end{claim}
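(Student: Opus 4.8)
The plan is to rule out the only small value of $k$ not already forbidden by $C$ being a cycle. Since $G$ is $2$-connected (Claim~\ref{2-conn}), the outer boundary $C$ is a genuine cycle, so $k \geq 3$ is automatic; the real content is to exclude $k = 3$, thereby sharpening this to $k \geq 4$ (which in particular gives the stated bound).

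I would argue by contradiction, supposing $k = 3$. Then $C = v_0 v_1 v_2 v_0$, and since all indices are read modulo $k = 3$, the vertex $v_3$ is identified with $v_0$. Feeding this into Claim~\ref{abinLv4} yields $\{a,b\} \subseteq L(v_3) = L(v_0)$. But $L(v_2) = \{2,a,b\}$ has three elements, so $a$ and $b$ are distinct and $|\{a,b\}| = 2$, whereas $|L(v_0)| = 1$. The containment $\{a,b\} \subseteq L(v_0)$ is then impossible, and this contradiction gives $k \neq 3$, hence $k \geq 4$.

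I do not expect any genuine obstacle here: this is essentially a one-line indexing-and-counting check. The only points to watch are that the modular index collapses $v_3$ onto $v_0$ precisely in the case $k = 3$, and that the distinctness of $a$ and $b$ is exactly the information recorded by $|L(v_2)| = 3$.
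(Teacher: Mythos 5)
Your argument is exactly the paper's: the author justifies this claim with the one-liner ``Since $|L(v_0)| = 1$, Claim~\ref{abinLv4} implies the following,'' i.e.\ if $k=3$ then $v_3=v_0$ and $\{a,b\}\subseteq L(v_0)$ is impossible since $|\{a,b\}|=2>1$. You are also right that the stated bound $k\geq 3$ is already immediate from $C$ being a cycle and that the content of the argument (consistent with the label \texttt{kgeq4}) is really $k\geq 4$.
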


By Claim \ref{abinLv4} we may assume that $L(v_3) = \{a,b,d\}$ for some colour $d$. Note that since $L$ is a restriction of a $(4,2)$-list assignment, $d \not \in L(v_2)$.
\begin{claim}\label{dinv4}
$d \in L(v_4)$.
\end{claim}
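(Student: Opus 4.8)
The plan is to argue by contradiction, mirroring the reduction used in the proof of Claim \ref{abinLv4}. Suppose $d \notin L(v_4)$. I would delete $v_3$ to obtain $G' := G - v_3$ and define a list assignment $L'$ on $G'$ by setting $L'(v) = L(v) \setminus \{d\}$ for every $v \in N_G(v_3)$ and $L'(v) = L(v)$ otherwise. Letting $C'$ be the boundary of the outer face of $G'$, the aim is to apply Theorem \ref{thm-inductive} to $(G', C', P, L')$ by the minimality of $G$, obtain an $L'$-colouring $\phi$ extending the colouring of $P$, and then set $\phi(v_3) = d$. Since $d$ has been deleted from the list of every neighbour of $v_3$, the resulting colouring is a proper $L$-colouring of $G$, contradicting the choice of $G$.

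The first thing to verify is that $(G', L')$ meets the list-size hypotheses of Theorem \ref{thm-inductive}, and here the bookkeeping is clean. Since $C$ is chordless and $k \geq 3$ (Claim \ref{kgeq4}), the only neighbours of $v_3$ on $C$ are $v_2$ and $v_4$, and every other neighbour of $v_3$ lies in the interior of $C$, hence has a list of size at least $4$ in $G$ and size at least $3$ in $G'$ — enough for the former interior vertices that now appear on $C'$. The boundary neighbours are unaffected by the deletion of $d$: we have $d \notin L(v_2)$ (established just before the claim) and $d \notin L(v_4)$ by assumption, so $|L'(v_2)| = 3$ and $|L'(v_4)| \geq 3$. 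The path $P = v_0v_1$ is untouched and remains a subpath of $C'$, since chordlessness forbids the chord $v_1 v_3$, while $v_0 v_3 \in E(G)$ only in the degenerate case $v_4 = v_0$ (i.e. $k = 4$); in that case $L(v_4) = \{1\}$ and the assumption $d \notin L(v_4)$ forces $d \neq 1$, so the recolouring $\phi(v_3) = d$ still avoids both $\phi(v_2)$ and $\phi(v_0) = 1$, and the argument goes through verbatim.

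The step I expect to be the main obstacle is confirming that $G'$ contains no offensive triangle under $L'$, as this is required in order to invoke Theorem \ref{thm-inductive}. Deleting the colour $d$ can only shrink list intersections, so the sole danger is a triangle $T = xyz$ of $G$ not using $v_3$ with $|\bigcap_{v \in V(T)} L(v)| = 3$ and $d \in \bigcap_{v \in V(T)} L(v)$, at least one of whose vertices lies in $N_G(v_3)$: such a triangle would acquire intersection exactly $2$ in $L'$ and thereby become offensive. I would rule this out using planarity together with the offensive-triangle-freeness of $G$: the edge joining $v_3$ to a vertex of $T$ lies inside or outside the region bounded by $T$, and in each case locating $v_3$ relative to $T$ and examining the triangles that $v_3$ forms with the edges of $T$ — whose list-intersections are heavily constrained by $L(v_3) = \{a,b,d\}$ — should force an offensive triangle already present in $G$, the desired contradiction. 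Carrying out this case analysis rigorously is the delicate part of the argument.
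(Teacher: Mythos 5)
Your reduction is exactly the paper's: delete $v_3$, remove $d$ from the lists of its neighbours, invoke the minimality of $G$, and colour $v_3$ with $d$; the list-size bookkeeping via chordlessness and $d \notin L(v_2) \cup L(v_4)$ matches the paper's argument as well. The one step you leave unfinished --- and flag as the ``delicate part'' --- is not actually delicate, and no case analysis on the planar position of $v_3$ is needed. The triangle you fear, one with $\bigl|\bigcap_{v \in V(T)} L(v)\bigr| = 3$, cannot exist: for any triangle $T = xyz$ we have $\bigcap_{v \in V(T)} L(v) \subseteq L(x) \cap L(y)$, and the hypothesis $|L(x) \cap L(y)| \leq 2$ bounds the triple intersection by $2$; since $G$ has no offensive triangles, every triple intersection is in fact at most $1$, and passing to sublists can only shrink it. Hence $G'$ with $L'$ has no offensive triangles, and your argument closes. (Your handling of the degenerate case $v_4 = v_0$ is a nice extra check that the paper leaves implicit.)
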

\begin{proof}
Suppose not. Let $G' = G-v_3$, and let $L'$ be the list assignment for $G'$ with $L'(v) = L(v)$ for all $v \in V(G') \setminus N_G(v_3)$, and $L'(v) = L(v) \setminus \{d\}$ for all $v \in N_G(v_3)$. Let $C'$ be the subgraph of $G'$ whose edge-set and vertex-set are precisely those of the boundary walk of the outer face of $G'$. By Claim \ref{chordless} and the fact that $d \not \in L(v_2) \cup L(v_4)$, we have that $|L'(v)| \geq 3$ for all $v \in V(C') \setminus V(P)$. By the minimality of $G$, it follows that $G'$ admits an $L'$-colouring $\phi$. But $\phi$ extends to an $L$-colouring of $G$ by setting $\phi(v_3) = d$, a contradiction.
\end{proof}

Since $L$ is a restriction of a $(4,2)$-list assignment and $d \in L(v_4)$ by Claim \ref{dinv4}, it follows that there exists a colour in $\{a,b\}$ that is not in $L(v_4)$. Without loss of generality, we may assume $a \not \in L(v_4)$. Let $\phi$ be a colouring of $v_2, v_3$ with $\phi(v_3) = a$ and $\phi(v_2) = b$.

Let $G' = G-\{v_2, v_3\}$, and let $L'$ be the list assignment for $G'$ with $L'(v) = L(v)$ for all $v \in V(G') \setminus (N_G(v_2) \cup N_G(v_3))$, and $L'(v) = L(v) \setminus \{\phi(u): u\in \{v_2,v_3\} \cap N_G(v)\}$ for all $v \in N_G(v_2) \cup N_G(v_3)$. Let $C'$ be the subgraph of $G$ whose edge-set and vertex-set are precisely those of the boundary walk of the outer face of $G'$. Note that since $G$ contains no offensive triangles and $\{a,b\} \subseteq L(v_2) \cap L(v_3)$, it follows that no vertex $v \in V(C') \setminus V(C)$ has $|L'(v)|\leq 2$. Moreover, since $a \not \in L(v_4)$, every vertex $v \in V(C') \cap V(C)$ satisfies $|L'(v)| = |L(v)|$. Thus $G'$ and $L'$ satisfy the hypotheses of Theorem \ref{thm-inductive}, and so by the minimality of $G$ we have that $G'$ admits an $L'$-colouring $\phi'$. But $\phi' \cup \phi$ is an $L$-colouring of $G$, a contradiction.
\end{proof}

Note that since the list assignment described in the statement of Theorem \ref{thm-inductive} is a restriction of the list assignment described in Theorem \ref{notriangles}, this immediately implies Theorem \ref{notriangles}.

Moreover, we note that a nearly identical proof can be used to show the following.

\begin{thm}
Let $G$ be a planar graph and $(L,M)$, a $(4,2)$-correspondence assignment for $G$. If for every triangle $xyzx$ in $G$ we have that $|M_{xy}| \cup |M_{yz}| \cup |M_{zx}| <4$, then $G$ is $(L,M)$-colourable.
\end{thm}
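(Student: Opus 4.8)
The plan is to prove the correspondence analogue of Theorem~\ref{thm-inductive} and then obtain this theorem from it, exactly as Theorem~\ref{notriangles} is obtained from Theorem~\ref{thm-inductive}. Concretely, I would state a claim identical to Theorem~\ref{thm-inductive} except that $L$ is replaced by a correspondence assignment $(L,M)$ with $|M_e|\leq 2$ for every edge $e$, and the hypothesis ``$G$ contains no offensive triangles'' is replaced by ``every triangle $xyz$ satisfies $|M_{xy}|+|M_{yz}|+|M_{zx}|<4$'' (that is, the total number of matched pairs on the triangle is at most three); the conclusion is that every $(L,M)$-colouring of the precoloured path $P$ extends to $G$. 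Since a $(4,2)$-correspondence assignment on a plane graph with all lists of size at least $4$ restricts to an assignment meeting these hypotheses, taking $P$ to be a single boundary vertex coloured arbitrarily yields the theorem.

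For the inductive statement I would run the induction exactly as in the proof of Theorem~\ref{thm-inductive}: take a minimal counterexample $G$, reduce to the case where interior vertices have lists of size $4$, boundary vertices lists of size at most $3$, and $P=v_0v_1$ is a precoloured edge with singleton lists. Claims~\ref{2-conn} and~\ref{chordless} carry over verbatim, since fixing a vertex's colour amounts to restricting its list to a singleton, and both $|M_e|\leq 2$ and the triangle condition are inherited under deletion and list restriction. Writing $L(v_1)=\{\gamma_1\}$, the analogue of ``$2\in L(v_2)$'' is that $M_{v_1v_2}$ matches $(v_1,\gamma_1)$ to some colour $\mu$ of $v_2$ (otherwise $v_1v_2$ imposes no constraint and $G-v_1v_2$ contradicts minimality). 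Let $a,b$ be the two colours of $v_2$ other than $\mu$. The analogue of Claim~\ref{abinLv4} is that $M_{v_2v_3}$ matches both $(v_2,a)$ and $(v_2,b)$ to colours of $v_3$: if, say, $(v_2,a)$ were unmatched across $v_2v_3$, I would colour $v_2$ with $a$ (which conflicts with neither $v_1$ nor $v_3$), delete it, and apply minimality. This forces $|M_{v_2v_3}|=2$, which is the crucial structural fact and the correspondence substitute for ``$|L(v_2)\cap L(v_3)|=2$''. Letting $a',b'$ be the colours of $v_3$ matched to $(v_2,a),(v_2,b)$ and $d'$ the remaining colour of $v_3$ (necessarily unmatched across $v_2v_3$), the analogue of Claim~\ref{dinv4} shows $M_{v_3v_4}$ matches $(v_3,d')$ to $v_4$, whence one of $a',b'$, say $a'$, is unmatched across $v_3v_4$.

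The final step is to colour $v_3$ with $a'$ and $v_2$ with $b$, delete both vertices, and extend by minimality. These choices conflict on none of $v_1v_2,v_2v_3,v_3v_4$, and after deletion the lists of the boundary vertices $v_1,v_4$ are unaffected (using that $a'$ is unmatched across $v_3v_4$ and $v_2\not\sim v_4$ by chordlessness). An interior vertex newly exposed to the boundary can lose two colours only if it is a common neighbour $v$ of $v_2$ and $v_3$ with $(v_2,b)$ matched to one colour of $v$ across $vv_2$ and $(v_3,a')$ matched to a distinct colour of $v$ across $vv_3$, forcing $|M_{vv_2}|\geq 1$ and $|M_{vv_3}|\geq 1$. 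Combined with the already-established $|M_{v_2v_3}|=2$, the triangle $vv_2v_3$ would satisfy $|M_{vv_2}|+|M_{vv_3}|+|M_{v_2v_3}|\geq 4$, contradicting the hypothesis. Hence every such vertex retains a list of size at least $3$, and the induction closes.

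I expect the main obstacle to be conceptual rather than computational. In list colouring the offensive obstruction is detected through the triple intersection $\bigcap_{v\in V(T)}L(v)$, which forces all three pairwise intersections to coincide on the same two colours; correspondence colouring has no such coherence across the three edges of a triangle, so this alignment has no direct analogue. The resolution, and the point that must be gotten exactly right, is that the earlier reductions force $|M_{v_2v_3}|=2$ on their own, so the deletion step needs only one matched pair on each of the other two edges of the triangle; this is precisely why the global bound $|M_{xy}|+|M_{yz}|+|M_{zx}|<4$ is the correct replacement for the no-offensive-triangle hypothesis. The remaining work is the routine bookkeeping of list sizes after deleting $v_2$ and $v_3$, the handling of small cycles where $v_4$ may coincide with $v_0$, and the verification that the re-added colours create no new conflicts.
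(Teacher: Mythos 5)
Your proposal is correct and is precisely the adaptation the paper has in mind: the paper offers no written proof of this theorem beyond the remark that a ``nearly identical proof'' to that of Theorem \ref{thm-inductive} works, and your claim-by-claim translation (with the forced $|M_{v_2v_3}|=2$ plus one matched pair on each of the other two edges of the triangle $vv_2v_3$ contradicting the sum bound) is exactly the right way to carry it out. You have also correctly identified the one genuinely new point, namely that the sum condition on triangles is the correct correspondence substitute for the triple-intersection condition, since no alignment of colours across the three edges is available.
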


We now prove Theorem \ref{thm-main}, restated below for convenience.

\main*

\begin{proof}
Let $G$ be a plane graph with $(4,2)$-list assignment $L$, and suppose $G$ contains a clique $H$ that intersects every offensive triangle in $G$. Note that since $G$ is planar, $|V(H)| \leq 4$. We break into cases depending on the number of vertices in $V(H)$.
\vskip 2mm
\noindent
\textbf{Case 1: $|V(H)| \leq 3$.}  Let $v_1 \in V(H)$, and let $\phi$ be an $L$-colouring of $H$.  Let $G' = G-v_1$, and let $L'$ be the list assignment obtained from $L$ by setting $L'(v) = L(v) $ for all $v \in V(G') \setminus N_G(v_1)$, and $L'(v) = L(v) \setminus \{\phi(v_1)\}$ for all $v \in N_G(v_1) \setminus V(H)$, and $L'(v) = \{\phi(v)\}$ for all $v \in V(H) \setminus \{v_1\}$. Note that there exists an embedding of $G'$ where every vertex in $N_G(v_1)$ is in the boundary walk of the outer face of $G'$. In what follows, we fix such an embedding. Note further that since every vertex in $V(H) \cap V(G')$ has a list of size one, it follows that $G'$ contains no offensive triangles. 

First suppose that $H-v_1$ is entirely contained in the boundary walk of the outer face of $G'$. Then $G'$ satisfies the hypotheses of Theorem \ref{thm-inductive}, and so admits an $L'$-colouring $\phi'$. Then $\phi' \cup \phi$ is an $L$-colouring of $G$, as desired.

Thus we may assume that $H-v_1$ is not entirely contained in the boundary walk of the outer face of $G'$. Since $V(H)-\{v_1\}$ is contained in the boundary walk of the outer face of $G'$, it follows that $H-v_1$ contains two vertices \textemdash say $v_2$ and $v_3$ \textemdash and that the edge $v_2v_3$ is not contained in the boundary walk of the outer face of $G'$. Thus $v_2v_3$ is a chord in a cycle contained in the boundary walk of the outer face of $G'$. Note the chord $v_2v_3$ separates $G'$ into two graphs $G_1$ and $G_2$ with $G_1 \cup G_2 = G$, and $G_1 \cap G_2 = v_2v_3$. Moreover, $v_2v_3$ is contained in the outer face boundary walk of each of $G_1$ and $G_2$. Thus by Theorem \ref{thm-inductive}, $G_1$ and $G_2$ each admit an $L'$-colouring $\phi_1$ and $\phi_2$, respectively; and so $\phi_1\cup \phi_2 \cup \phi$ is an $L$-colouring of $G$, as desired.
\vskip 2mm
\noindent
\textbf{Case 2: $|V(H)| = 4$.} Let $v_1, v_2, v_3$ and $v_4$ be the vertices of $H$, and let $f_1, f_2, f_3, f_4$ be the faces of $H$ where $f_4$ is the infinite face and is incident to $v_1, v_2$, and $v_3$. Let $\phi$ be an $L$-colouring of $G$, and for $i \in \{1,2,3,4\}$ let $G_i$ be the subgraph of $G$ induced by the vertices in $f_i$ and its boundary. For $i = \{1,2,3\}$, let $G_i' = G_i - v_4$, and let $L'$ be a list assignment obtained from $L$ by setting $L'(v) = L(v) $ for all $v \in V(G_i) \setminus N_G(v_4)$, and $L'(v) = L(v) \setminus \{\phi(v_4)\}$ for all $v \in N_G(v_4) \setminus V(H)$, and $L'(v) = \{\phi(v)\}$ for all $v \in V(H) \setminus \{v_4\}$. Note that for each $i \in \{1,2,3\}$, we have that $G_i'$ contains no offensive triangles. Moreover, $G_i'$ satisfies the hypotheses of Theorem \ref{thm-inductive} and thus admits an $L'$-colouring $\phi_i$.  
By an identical argument to that given in Case 1, $G_4$ admits an $L'$-colouring $\phi_4$. By construction $\phi \cup \phi_1 \cup \phi_2 \cup \phi_3 \cup \phi_4$ is an $L$-colouring of $G$, as desired. 
\end{proof}

\section{Counterexamples for Correspondence Colouring}\label{ctx}
In this section, we prove Theorem \ref{ctx42} by constructing a graph $G$ that is not $(4,2)$-correspondence colourable for a given $(4,2)$ correspondence assignment $(L,M)$. Note that $G$ and $L$ together are also an example of a graph that is not $(4,3)$-choosable. For each edge $uv \in E(G)$ the edges in the matchings $M_{uv}$ will be of the form $M_{uv} = \{(u,c_1)(v,c_1), \dots, (u,c_k)(v,c_k)\}$. For that reason, we will write $M_{uv} = \{c_1, \dots, c_k\}$ instead of $M_{uv} = \{(u,c_1)(v,c_1), \dots, (u,c_k)(v,c_k)\}$. Informally: if a colour in $L(v)$ is matched to a colour in $L(u)$,  both vertices will have the same name for this colour (though the converse crucially does not hold). 

%\subsection{A counterexample for $(3,1)$-correspondence colouring}

%In this subsection, we prove Theorem \ref{ctx31} by constructing a graph $G$ that is not $(3,1)$-correspondence colourable for a given $(3,1)$ correspondence assignment $(L,M)$.

%\begin{proof}[Proof of Theorem \ref{ctx31}]
%We begin by constructing a gadget, $H$, obtained from a copy of $K_4$ by deleting an edge. Let $V(H) = \{v_1, v_2, v_3, v_4\}$, labelled so that $v_1v_2 \not \in E(H)$. Let $L(v_1) = \{a\}$ and $L(v_2) = \{b\}$; let $L(v_3) = L(v_4)= \{a, b, 1\}$.  For $u \in \{v_3, v_4\}$, set $M_{v_1u} = \{a\}$, and $M_{v_2u} = \{b\}$. Finally, set $M_{v_3v_4} = \{1\}$. See Figure \ref{fig:31} for an illustration of $H$ and $(L,M)$. Note that $H$ is not $(L,M)$-colourable.

%\input{fig1}

%Let $H_1, \dots, H_9$ be nine copies of $H$, where in each copy of $H$ we define $(a,b)$ to be a distinct pair in $\{3,4,5\} \times \{6,7,8\}$. Let $G$ be the graph obtained from $H_1, \dots, H_9$ by identifying all copies of $v_1$ to a new vertex $u_1$ and all copies of $v_2$ to a new vertex $u_2$. Finally, let $L(u_1) = \{3,4,5\}$ and $L(u_2) = \{6,7,8\}$. Then $G$ is not $(L,M)$-colourable, as any choice of colour for $u_1$ and $u_2$ results in one of $H_1$ through $H_9$ having no $(L,M)$-colouring.

%\end{proof}

We note the gadget used below (as well as its list assignment) was also used by Voigt and Wirth \cite{voigtctx} to construct a graph that is 3-colourable but not 4-choosable. Though the base gadgets are identical, the final constructions are different. Interestingly, we note that most matchings in the correspondence assignment of the gadget have size only one.

\begin{proof}[Proof of Theorem \ref{ctx42}] We begin by constructing a gadget $H$ (shown in Figure \ref{fig:42}) with $V(H) = \{v_1, \dots, v_9\}$ and $E(H) = \{v_1v_3, v_1v_8, v_1v_5, v_1v_6, v_1v_4, v_2v_3, v_2v_9, v_2v_5, v_2v_7, v_2v_4, v_3v_8, v_3v_9, v_4v_6, v_4v_7, \\ v_5v_6,  v_5v_7, v_5v_8, v_5v_9\}$.

We define $L(v_1) = \{a\}$, and $L(v_2) = \{b\}$; $L(v_3) = \{a,b,1,2\}$ and $L(v_4) = \{a,b,3,4\}$; $L(v_6) = \{a,3,4,5\}$ and $L(v_7) = \{b,3,4,5\}$; $L(v_8) = \{a,1,2,6\}$ and $L(v_9) = \{b,1,2,6\}$; and finally $L(v_5) = \{a,b,5,6\}$.

 \begin{figure}[ht]
\begin{center}
\begin{tikzpicture}[scale=0.72, invisnode/.style={circle, draw=white, fill=white,  minimum size=1mm}, blcknd/.style={inner sep=1pt, minimum size=8pt, circle,draw,fill}]
\node[blcknd, label={above:\{a\}}] at (0,6) (v1) {\color{white}$v_1$};
\node[blcknd, label={below:\{b\}}] at (0,-6) (v2) {\color{white}$v_2$};
\node[blcknd, label={left:\{a,b,1,2\}}] at (-9.5,0) (v3) {\color{white}$v_3$};
\node[blcknd, label={right:\{a,b,3,4\}}] at (9.5,0) (v4) {\color{white}$v_4$};
\node[blcknd, label={left:{\{a,b,5,6\}}}] at (0,0) (v5) {\color{white}$v_5$};
\node[invisnode, label={right:\{a,3,4,5\}}] at (2.75,1.8) (v6f) {};
\node[invisnode, label={right:\{b,3,4,5\}}] at (2.75,-1.8) (v7f) {};
\node[invisnode, label={left:\{a,1,2,6\}}] at (-2.75,1.8) (v8f) {};
\node[invisnode, label={left:\{b,1,2,6\}}] at (-2.75,-1.8) (v9f) {};

\node[blcknd] (v6) at (2.9,1.5)  {\color{white}$v_6$};  
\node[blcknd] (v7) at (2.9,-1.5)  {\color{white}$v_7$};  
\node[blcknd] (v8) at (-2.9,1.5)  {\color{white}$v_8$};  
\node[blcknd] (v9) at (-2.9,-1.5)  {\color{white}$v_9$};

\node[] at (3.5,4.3) {a};
\node[] at (-3.5,4.3) {a};
\node[] at (3.5, -4.3) {b};
\node[] at (-3.5,-4.3) {b};

\node[] at (-1.9,-3.5) {b};
\node[] at (.3,-3.1) {b};
\node[] at (1.9,-3.5) {b};

\node[] at (-1.9,3.5) {a};
\node[] at (.3,3.1) {a};
\node[] at (1.9,3.5) {a};

\node[] at (5.2,0.6) {3,4};
\node[] at (5.2,-0.6) {3,4};
\node[] at (-5.2,0.6) {1,2};
\node[] at (-5.2,-0.6) {1,2};

\node[] at (3.4,0) {3,4};
\node[] at (-3.4,0) {1,2};

\node[] at (1,1) {5};
\node[] at (1,-1) {5};
\node[] at (-1,1) {6};
\node[] at (-1,-1) {6};

\draw[black] (v1)--(v3);
\draw[black] (v1)--(v8);
\draw[black] (v1)--(v5);
\draw[black] (v1)--(v6);
\draw[black] (v1)--(v4);
\draw[black] (v2)--(v3);
\draw[black] (v2)--(v9);
\draw[black] (v2)--(v5);
\draw[black] (v2)--(v7);
\draw[black] (v2)--(v4);
\draw[black] (v3)--(v8);
\draw[black] (v3)--(v9);
\draw[black] (v4)--(v6);
\draw[black] (v4)--(v7);
\draw[black] (v5)--(v6);
\draw[black] (v5)--(v7);
\draw[black] (v5)--(v8);
\draw[black] (v5)--(v9);
\draw[black] (v8)--(v9);
\draw[black] (v6)--(v7);
\end{tikzpicture}
\end{center}
\caption{The gadget used in the proof of Theorem \ref{ctx42}. The list of each vertex is indicated in braces beside the vertex. A colour $c$ is indicated beside an edge $e = uv$ iff $(u,c)(v,c) \in M_{uv}$.}
    \label{fig:42}
\end{figure}
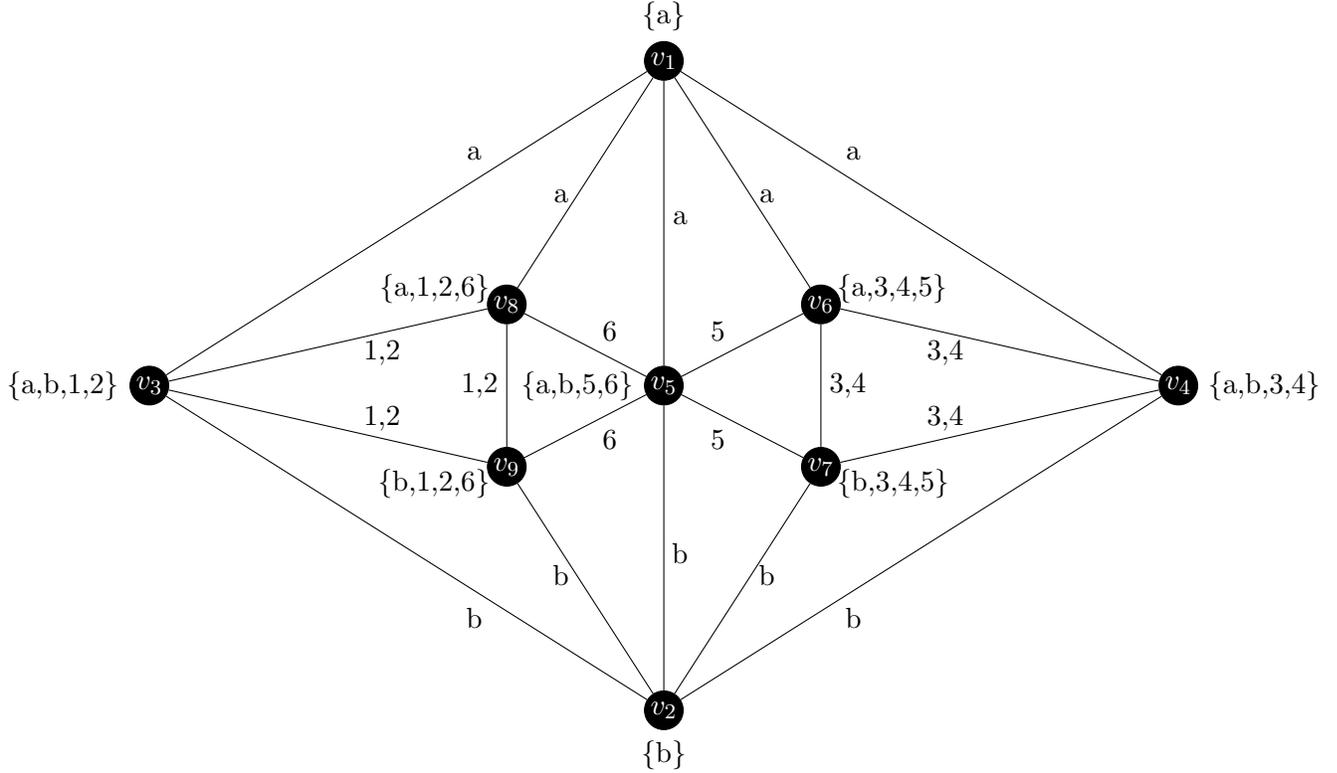

Finally, we define $M$ as follows. For each $u \in \{v_3, v_4, \dots, v_9\}$, set $M_{v_1u} = \{a\}$ and $M_{v_2u} = \{b\}$. For $u \in \{v_6, v_7\}$, set $M_{v_4u} = \{3,4\}$. For $u \in \{v_8,v_9\}$, set $M_{v_3u} = \{1,2\}$. Set $M_{v_6v_7} = \{3,4\}$, and $M_{v_8v_9} = \{1,2\}$. Finally, for $u \in \{v_6,v_7\}$, set $M_{v_5u} = \{5\}$ and for $u \in \{v_8, v_9\}$, set $M_{v_5u} = \{6\}$.  We claim that $H$ is not $(L,M)$-colourable: to see this, note that in every colouring $\phi$ of $H-v_5$, we have that $v_4$ is coloured either 3 or 4. This ensures that one of $v_6$ and $v_7$ receives colour 5. Symmetrically, one of $v_8$ and $v_9$ receives colour 6, and so there is no extension of $\phi$ to $v_5$.

Let $H_1, \dots, H_{16}$ be distinct copies of $H$, where in each copy of $H$ we define $(a,b)$ to be a distinct pair in $\{7,8,9,10\} \times \{11, 12, 13, 14\}$. Let $G$ be obtained from $H_1, \dots, H_{16}$ by identifying all copies of $v_1$ to a new vertex $u_1$ and all copies of $v_2$ to a new vertex $u_2$. Set $L(v_1) = \{7,8,9,10\}$ and $L(v_2) = \{11, 12, 13, 14\}$. Then $G$ is not $(L,M)$-colourable, as any choice of colour for $u_1$ and $u_2$ results in one of $H_1$ through $H_{16}$ having no colouring. 
\end{proof}

\begin{ack}
Thank you very much to both Alvaro Carbonero and Sophie Spirkl for helpful discussions. Thank you also to Ross Kang for pointing me in the direction of other results related to the correspondence colouring version of choosability with separation. In particular, an earlier version of this paper gave an analogous theorem to Theorem \ref{ctx42} for (3,1)-correspondence choosability, but this result was already known due to Zhu and Hell \cite{hell2008adaptable}.
\end{ack}
\bibliographystyle{siam}
\bibliography{bibliog}
\end{document}